\newcommand{\F}{\mathbb{F}}
\newtheorem{theorem}{Theorem}
\newtheorem{definition}{Definition}
\newtheorem{corollary}{Corollary}[theorem]
\newtheorem{lemma}[theorem]{Lemma}
\newtheorem{proposition}{Proposition}
\newtheorem{question}{Question}
\begin{document}
 
%
%
 
\title{Permutations minimizing the number of collinear triples} 
\author{Joshua Cooper and Jack Hyatt}
\maketitle

\begin{abstract}
    We characterize the permutations of $\mathbb{F}_q$ whose graph minimizes the number of collinear triples and describe the lexicographically-least one, affirming a conjecture of Cooper-Solymosi.  This question is closely connected to Dudeney's No-3-in-a-Line problem, the Heilbronn triangle problem, and the structure of finite plane Kakeya sets.  We discuss a connection with complete sets of mutually orthogonal latin squares and state a few open problems primarily about general finite affine planes.
\end{abstract}

\section{Introduction}

The classic open ``no $3$ in a line'' problem of Dudeney (1917) asks: what is the maximum number of points that can be placed on an $n \times n$ grid so that there are no three collinear points?  The answer is certainly no more than $2n$, since any $2n+1$ points contain at least $3$ points on at least one of the $n$ vertical lines.  Even though it is conjectured that, in fact, no more than $\pi/\sqrt{3} n \approx 1.814 n$ points can be placed without any collinear triples (\cite{guy}, corrected by \cite{pegg}), not even $2n-1$ is known.  This problem is closely connected to the classical Heilbronn triangle problem, and Erd\H{o}s gave a lower bound of $n-o(n)$ (cf.~\cite{roth}), later improved to $3n/2-o(n)$ in \cite{hall}.  It is also closely related to the finite field Kakeya problem (cf.~\cite{blokhuis}), as employed in \cite{li}: What is the fewest number of points in a subset of the affine plane containing a line with every slope?  It is straightforward to see that any collinear-triple-free set with $2n$ points can be partitioned into the graphs of two permutations.  Thus, \cite{cooper} studied collinearities in permutations -- discussed below, after a few definitions.

Throughout the paper, we use notation that $\F_q$ is the finite field with order a prime power, $q$; at times we suppress the subscript and simply write $\F$. Given a function $f : \F \rightarrow \F$, the graph of $f$ is the set $\{(x,f(x)) : x \in \F\}$.  The affine geometry over $\F$ of dimension $d$, denoted $AG(q,d)$, consists of the points $\F^d$ and all lines of the form $\{\mathbf{v}+t\mathbf{w} : t \in \F\}$ where $\mathbf{v},\mathbf{w} \in \F^d$ and $\mathbf{w}$ is not the all-zero vector $\hat{0}$.  The projective geometry $PG(q,d)$ over $\F$ of dimension $d$ has point set $(\F^{d+1} \setminus \{\hat{0}\}) /\sim$, where $\sim$ is the equivalence relation defined by: $\mathbf{v} \sim \mathbf{w}$ iff there exists $\lambda \in \F^\times$ so that $\mathbf{w} = \lambda \mathbf{v}$; the lines $\ell$ of $PG(q,d)$ are obtained from each $2$-dimensional subspace $P \subseteq AG(q,d)$ by taking $\ell = (P \setminus \{\hat{0}\})/\sim$.  The $\sim$-equivalence class of the points $(x_1,\ldots,x_{d+1}) \in AG(q,d)$ is denoted $[x_1 : \ldots : x_{d+1}]$.  We take $d=2$ throughout the sequel -- i.e., affine and projective planes -- and note that the graph of a function can be considered a subset of $AG(q,2)$.

\begin{definition}
    A collinear triple in a permutation, $\sigma$ of $\F$, is a set of three distinct points $(x_1,\sigma(x_1)),(x_2,\sigma(x_2)),(x_3,\sigma(x_3))$ of the graph of $\sigma$ for which there exists a line $\ell \in AG(q,2)$ containing each $(x_i,\sigma(x_i))$, $i=1,2,3$.  Equivalently,
    \[
    x_1(\sigma(x_2)-\sigma(x_3))+x_2(\sigma(x_3)-\sigma(x_1)) + x_3(\sigma(x_1)-\sigma(x_2))=0.
    \]
\end{definition}

In \cite{cooper}, it was conjectured that the best lower bound for the number of collinear triples in a permutation over $\F$, where $q>2$ is prime, is
\begin{equation}\label{triples}
    \Psi(q) \geq \frac{q-1}{2}.
\end{equation}
In \cite{li}, Li confirmed this conjecture by invoking the main result of \cite{blokhuis}. It was also conjectured in \cite{cooper} that the permutation 
\[
g(x)=\begin{cases}
		1, & \text{if $x=1$}\\
            x/(x-1), & \text{otherwise}
		 \end{cases}
\]
is the lexicographic-least permutation that satisfies the lower bound of $(q-1)/2$ collinear triples. In the next section, we confirm that conjecture as well, specifically in \Cref{thm:lexleast}.  In the last section, we discuss a connection with complete sets of mutually orthogonal latin squares, present some intriguing computations on affine planes arising from non-Desarguesian projective planes, and state a few open problems.

\section{Main Results}

We assume $q$ is an odd prime power throughout.  Let $\sigma$ be a permutation over $\F_q$, and $\Gamma$ be the graph of $\sigma$. Assume $\Gamma$ minimizes the number of collinear triples, i.e. the number of collinear triples in $\Gamma$ is $(q-1)/2$. Map $\Gamma$ to $PG(2,q)$ by $\rho : (x,y) \mapsto [x : y : 1]$, and construct $\Omega \coloneqq \rho(\Gamma) \cup \{[1:0:0],[0:1:0]\}$. The plane $z=0$ is the {\em line at infinity}, denoted $\ell_\infty$.

\begin{definition}
    For $q$ a prime power, let $\Omega$ be a set of $q+2$ points in $PG(2,q)$. A point $P \in \Omega$ is an \emph{internal nucleus} of $\Omega$ if every line through $P$ meets $\Omega$ in exactly one other point.
\end{definition}

Since $\sigma$ is a permutation with $q$ elements and $\{[1:0:0],[0:1:0]\} \notin \rho(\Gamma)$, we have that $|\Omega| = q+2$.  Note that $[1:0:0]$ is an internal nucleus (as well as $[0:1:0]$, but we only need one), as the projective line between $[1:0:0]$ and $[x_0:y_0:z_0]$ is $z_0y-y_0z=0$. For points arising from the original permutation, the line is $y=y_0z$, which is unique since each element in a permutation has a unique $y_0$. For the other added point, $[0:1:0]$, the line is $\ell_\infty$.  

In the following result, we consider the minimum number of lines intersecting $\Omega$, denoted $k^*(\Omega)$.  

\begin{proposition} \label{prop:kstar}
\begin{equation} \label{eqkstar}
k^*(\Omega) = \frac{(q+1)(q+2)}{2} + \frac{q-1}{2}.
\end{equation}
\end{proposition}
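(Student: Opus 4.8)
The plan is to compute the number of lines meeting $\Omega$ by a double count organized around the two internal nuclei. First I would record the two consequences of the internal-nucleus property established just above the proposition: every one of the $2q+1$ lines through $[1:0:0]$ or $[0:1:0]$ (including $\ell_\infty$) meets $\Omega$ in exactly two points, while every remaining line has finite nonzero slope and hence contains neither $[1:0:0]$ nor $[0:1:0]$, so it meets $\Omega$ only in points of $\rho(\Gamma)$. Consequently the only lines that can contain three or more points of $\Omega$ are these ``proper secants,'' each corresponding to an affine line $y=mx+b$ with $m\neq 0$ carrying the graph points $(x,\sigma(x))$ for which $\sigma(x)=mx+b$; moreover $k^*(\Omega)=(2q+1)+N$, where $N$ is the number of proper secants that actually contain a graph point.

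Next I would estimate $N$ one parallel class at a time. Fix a slope $m\neq 0$; the $q$ lines of that slope partition the $q$ graph points, so if their nonzero intersection sizes are $a_{m,1},a_{m,2},\ldots$ then $\sum_i a_{m,i}=q$, the number of nonempty such lines is $s_m$, and $N=\sum_{m\neq 0}s_m$. Every pair of graph points lies on a unique proper secant, and every collinear triple of graph points lies on one, so summing over all slopes gives $\sum_{m}\sum_i\binom{a_{m,i}}{2}=\binom{q}{2}$ and $\sum_{m}\sum_i\binom{a_{m,i}}{3}=(q-1)/2$, the latter because (as noted before the proposition) neither added point lies in a collinear triple, so the triple count of $\Omega$ equals that of $\Gamma$. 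The elementary inequality $\binom{a}{2}\le (a-1)+\binom{a}{3}$, valid for every integer $a\ge 1$ with equality exactly when $a\in\{1,2,3\}$ (it reduces to $(a-2)(a-3)\ge 0$), applied to each nonempty line and summed over $i$, yields $\sum_i\binom{a_{m,i}}{2}\le (q-s_m)+\sum_i\binom{a_{m,i}}{3}$. Summing this over the $q-1$ slopes and inserting the two global counts gives $\binom{q}{2}\le q(q-1)-N+(q-1)/2$, hence $N\le (q^2-1)/2$ and therefore $k^*(\Omega)\le (2q+1)+(q^2-1)/2=\frac{(q+1)(q+2)}{2}+\frac{q-1}{2}$.

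Finally, equality in the displayed bound holds if and only if equality holds in $\binom{a}{2}\le(a-1)+\binom{a}{3}$ for every nonempty line, i.e.\ if and only if no line of $PG(2,q)$ meets $\Omega$ in four or more points, equivalently no four points of the graph $\Gamma$ are collinear. This is the one step not given by pure counting, and I expect it to be the crux: the four moment identities alone do not force it, since perturbing any distribution by a single $4$-point line (compensated by the pattern $(+1,-4,+6,-4,+1)$ in the $0,1,2,3,4$-secant counts) preserves all of them, so the bound must be complemented by the specific structure of a triple-minimizing permutation. I would try to establish it either by a local exchange argument---showing that a proper secant carrying four graph points could be destroyed by redefining $\sigma$ on a few points so as to strictly lower the total number of collinear triples, contradicting minimality---or, more robustly, by invoking the structural characterization of minimizers together with the Blokhuis/Li framework to conclude directly that every secant of $\Omega$ carries at most three points. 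Granting this, each of the $(q-1)/2$ triples sits on its own $3$-point line, the inequality is an equality throughout, and the count collapses to the claimed value $\frac{(q+1)(q+2)}{2}+\frac{q-1}{2}$.
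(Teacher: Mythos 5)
Your upper bound is correct, but it is not a different argument from the paper's: the per-line inequality $\binom{a}{2}\le(a-1)+\binom{a}{3}$ that you apply within each parallel class is exactly the Bonferroni truncation $1\le a-\binom{a}{2}+\binom{a}{3}$, and summing it over all lines meeting $\Omega$ recovers the paper's one-line estimate $k^*(\Omega)\le(q+1)(q+2)-\binom{q+2}{2}+\frac{q-1}{2}$. Your bookkeeping of the $2q+1$ two-point lines through the nuclei and of the moment identities $\sum\binom{a}{2}=\binom{q}{2}$, $\sum\binom{a}{3}=\frac{q-1}{2}$ all checks out, so the inequality $k^*(\Omega)\le\frac{(q+1)(q+2)}{2}+\frac{q-1}{2}$ is established.

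The genuine gap is the matching lower bound, which you correctly identify as the crux but do not close. Equality in your chain is equivalent to $\Omega$ having no $4$-secant, and the counting identities alone cannot force this: for $q\ge 11$ a single $4$-point line contributes only $4$ of the $(q-1)/2$ available triples, so it is not excluded by arithmetic. The paper closes this by citing the theorem of Blokhuis and Mazzocca \cite{blokhuis}, which says that \emph{any} $(q+2)$-set in $PG(2,q)$ possessing an internal nucleus meets at least $\frac{(q+1)(q+2)}{2}+\frac{q-1}{2}$ lines; combined with the Bonferroni upper bound this gives equality at once. Neither of your two proposed repairs substitutes for this input. Appealing to ``the structural characterization of minimizers'' (conic plus external point) is circular in the paper's architecture, since that characterization is itself the Blokhuis--Mazzocca consequence of $\Omega$ attaining the bound in this very proposition. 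The local exchange argument --- redefining $\sigma$ at a few points to destroy a $4$-secant while strictly lowering the triple count --- is not carried out and is not obviously true, since any such modification can create new collinear triples elsewhere. To complete the proof you must state and invoke the lower bound on the number of secants to a $(q+2)$-set with an internal nucleus from \cite{blokhuis}.
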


\begin{proof}

Using the inclusion-exclusion principle, we first add the number of lines through each point, then subtract off the number of lines through pairs of points, then add back number of lines through three points. By Bonferroni's Inequality, the result is an upper bound for $k^*(\Omega)$ (for any $\Omega$ consisting of $q+2$ points, including an internal nucleus). So we end up with
\[
k^*(\Omega) \leq (q+1)(q+2) - \binom{q+2}{2} + \frac{q-1}{2} = \frac{(q+1)(q+2)}{2} + \frac{q-1}{2}.
\]
On the other hand, by \cite{blokhuis}, the above quantity is also a lower bound, so we have equality.
\end{proof}

A {\em conic} in $PG(q,2)$ is the solution set to a homogeneous polynomial $p(x,y,z)$ of degree $2$.  If $p \not \equiv 0$, then we say that $p$ is ``nondenegerate'', and the conic is said to be ``irreducible'' if $p$ is irreducible over $\F_q$.  Blokhuis and Mazzocca showed in \cite{blokhuis} that any $\Omega$ with $|\Omega|=q+2$, including an internal nucleus, which satisfies (\ref{eqkstar}), is a nondegenerate irreducible conic with an external point.  Therefore, we have the following.

\begin{corollary}
$\Omega$ is a nondegenerate irreducible conic with an external point.    
\end{corollary}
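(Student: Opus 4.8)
The plan is to obtain the statement as a direct application of the Blokhuis--Mazzocca characterization quoted just above, so that the whole task reduces to verifying that the particular set $\Omega$ built from the minimizing permutation $\sigma$ satisfies each of that theorem's hypotheses. Those hypotheses are threefold: that $\Omega$ consists of exactly $q+2$ points of $PG(2,q)$, that $\Omega$ possesses an internal nucleus, and that the extremal value of $k^*(\Omega)$ recorded in \eqref{eqkstar} is actually attained.

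First I would invoke the cardinality condition $|\Omega| = q+2$, already established in the discussion preceding the definition of internal nucleus: since $\sigma$ is a permutation, $\rho(\Gamma)$ contributes $q$ distinct points, and neither $[1:0:0]$ nor $[0:1:0]$ lies in $\rho(\Gamma)$, so adjoining these two points yields $q+2$ points in all. Second, the internal-nucleus hypothesis is witnessed by $[1:0:0]$: as computed in that same discussion, the line joining $[1:0:0]$ to any other point of $\Omega$ is uniquely determined (it is $y = y_0 z$ for a graph point $[x_0:y_0:1]$, unique because $\sigma$ is single-valued, and $\ell_\infty$ for $[0:1:0]$), so every line through $[1:0:0]$ meets $\Omega$ in exactly one further point.

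The third hypothesis, that equality holds in \eqref{eqkstar}, is precisely the content of \Cref{prop:kstar}, whose proof pins $k^*(\Omega)$ to the single value $\tfrac{(q+1)(q+2)}{2} + \tfrac{q-1}{2}$ by combining the Bonferroni upper bound with the lower bound of \cite{blokhuis}. With all three hypotheses in hand, the Blokhuis--Mazzocca theorem applies verbatim and yields that $\Omega$ is a nondegenerate irreducible conic together with an external point.

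I expect no genuine obstacle at this stage, since the substantive work -- establishing the exact value of $k^*(\Omega)$ and the deep extremal classification itself -- has already been carried out in \Cref{prop:kstar} and imported from \cite{blokhuis}, respectively. The one point deserving a moment's care is simply that the quantity appearing in \eqref{eqkstar} is literally the extremal value in the Blokhuis--Mazzocca classification, so that equality in \Cref{prop:kstar} is exactly the condition the classification demands; once this alignment is noted, the corollary is immediate.
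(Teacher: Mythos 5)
Your proposal is correct and follows exactly the route the paper takes: the paper's (implicit) proof of this corollary is precisely the observation that $|\Omega|=q+2$, that $[1:0:0]$ is an internal nucleus, and that \Cref{prop:kstar} gives equality in \eqref{eqkstar}, so the Blokhuis--Mazzocca classification applies directly. Your write-up merely makes explicit the hypothesis-checking that the paper leaves to the surrounding discussion.
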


Let us then rewrite $\Omega = \{[x:y:z] : ax^2 + by^2 + cxy + dxz +eyz + fz^2 = 0,\ (x,y,z)\neq(0,0,0)\} \cup \{ [x':y':z'] \}$ where $[x':y':z']$ is the external point, $(x',y',z') \neq (0,0,0)$. Note that the $q+1$ points that are not the external point will define the conic.  Thus, $q+1$ points determine the 6 coefficients of the conic's polynomial $p$.  Only $5$ points are needed, however, since the homogeneity of $p$ entails only $5$ degrees of freedom, and the irreducibility of $p$ implies that it is uniquely determined by these $5$ points.  (See, for example, \cite{coxeter}.)  As long as $q \geq 5$, any two nondegenerate conics obtained from $q+1$ points of $\Omega$ will share at least 5 points, as the only two points to differ will be the two excluded points. This means they are the same conic. So there is exactly one nondegenerate conic obtained from the point set $\Omega$, and the external point is the unique point not on this conic.  Shortly, we determine which of the $q+2$ elements of $\Omega$ is the external point.

Now we remove $[1:0:0]$ and $[0:1:0]$ and map back to the affine plane by $\rho^{-1}:[x:y:1] \mapsto (x,y)$. As the only thing to happen in this process was adding and removing the points $[1:0:0]$ and $[0:1:0]$, the permutation itself remains unchanged.  We obtain two possibilities for the image of $\Gamma$ under $\rho$, restriction to $z \neq 0$, and then $\rho^{-1}$.

If the external point was either $[1:0:0]$ or $[0:1:0]$, then
\[
\Gamma = \{(x,y): ax^2 + by^2 + cxy + dx +ey + f = 0\}.
\]
If the external point was neither $[1:0:0]$ nor $[0:1:0]$, then
\[
\Gamma = \{(x,y): ax^2 + by^2 + cxy + dx +ey + f = 0\} \cup \{(x',y')\}.
\]
where $[x':y':1]$ is the external point. We shall denote $\Gamma'$ as just the conic, removing the external point if we are in the latter of the above two cases.

\begin{lemma}\label{lem:ab0}
    For an odd prime power $q \geq 5$, $\Gamma'$ is of the form $\{(x,y): cxy + dx +ey + f = 0\}$, i.e., $a=b=0$ in $p$.
\end{lemma}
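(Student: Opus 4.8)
The plan is to convert the algebraic claim $a=b=0$ into an incidence statement about the two points at infinity and then read it off from the internal-nucleus property. Substituting $(x,y,z)=(1,0,0)$ into the conic's polynomial $p(x,y,z)=ax^2+by^2+cxy+dxz+eyz+fz^2$ yields the value $a$, and substituting $(0,1,0)$ yields $b$. Hence $a=0$ exactly when $[1:0:0]$ lies on the conic, and $b=0$ exactly when $[0:1:0]$ does. Writing $\C$ for the set of $q+1$ points of $\Omega$ lying on the conic, the lemma is therefore \emph{equivalent} to the assertion that the excluded external point of $\Omega$ is neither $[1:0:0]$ nor $[0:1:0]$, so that both of these infinite points belong to $\C$.

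To establish this I would argue by contradiction, supposing the external point is $[1:0:0]$ (the case of $[0:1:0]$ is identical, as it too is an internal nucleus). Then $\C=\Omega\setminus\{[1:0:0]\}$. Now invoke the internal-nucleus property recorded above: every one of the $q+1$ lines through $[1:0:0]$ meets $\Omega$ in exactly one further point. Since every point of $\Omega$ other than $[1:0:0]$ lies on $\C$, each such line meets $\C$ in exactly one point and is thus tangent to the conic. Consequently all $q+1$ lines through $[1:0:0]$ would be tangents of $\C$.

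The contradiction comes from the standard tangent structure of a nondegenerate conic in $PG(2,q)$ with $q$ odd: the conic carries a unique tangent at each of its $q+1$ points, and as a result no point off the conic lies on more than two tangent lines (a point external to $\C$, as the excluded point is by the Blokhuis--Mazzocca classification, lies on exactly two). Since $q\geq 5$ gives $q+1>2$, this contradicts the previous paragraph. Therefore the external point is an affine point $[x':y':1]$, both $[1:0:0]$ and $[0:1:0]$ lie on $\C$, and $a=b=0$, as claimed.

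I expect the only nonroutine ingredient to be the tangent count for conics over odd $q$ — that each of the $q+1$ points of $\C$ has a single tangent and hence that any point off $\C$ meets at most two tangent lines; once this is in hand, the clash with the internal-nucleus property is immediate. Everything else is bookkeeping about which of the $q+2$ points of $\Omega$ is the excluded external point.
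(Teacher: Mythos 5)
Your proof is correct, but it takes a genuinely different route from the paper's. You translate $a=b=0$ into the incidence statement that $[1:0:0]$ and $[0:1:0]$ lie on the conic, and then rule out either point being the excluded external point by playing the internal-nucleus property (all $q+1$ lines through that point would meet the conic in exactly one point, i.e., be tangents) against the classical tangent structure of a nondegenerate conic in odd characteristic (a point off the conic lies on at most two tangents, so $q+1>2$ is a contradiction). The paper instead argues purely algebraically: if $b\neq 0$, solving $p=0$ for $y$ forces the discriminant $(cx+e)^2-4b(ax^2+dx+f)$ to vanish for more than two values of $x$, hence identically, which makes $4b\cdot p=(cx+2by+e)^2$ a perfect square and contradicts irreducibility; then $a=0$ by symmetry. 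Your argument is shorter once the tangent-count fact is granted, and it delivers \Cref{cor:hasext} (that the external point is affine) as an immediate byproduct rather than as a separate consequence; the paper's computation is more self-contained, using only that $\Gamma'$ is the graph of a function and that $p$ is irreducible, without importing the classification of points of $PG(2,q)$ by tangent incidence. Both correctly restrict to odd $q$ — your tangent count would fail at the nucleus of a conic in even characteristic — so there is no gap; just be sure to state explicitly the standard fact you rely on (exactly one tangent at each conic point, hence at most two tangents through any point off the conic for $q$ odd).
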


\begin{proof}

If $b\neq 0$, then the discriminant of the quadratic obtained by solving for $y$ is $(cx+e)^2-4b(ax^2+dx+f)$, which must be $0$, or else there are $0$ or $2$ solutions to $ax^2+by^2+cxy+dx+ey+f=0$ for some $x$, contradicting that $\Gamma'$ is the graph of a function.

Expanding gives
\[
(4ab-c^2)x^2+(4bd-2ce)x+(4bf-e^2)=0.
\]
This quadratic would need to hold for every $x\in\F_q$, except the $x$ corresponding to the external point ignored in $\Gamma'$. That cannot be the case however when $q\geq4$, since quadratics only have at most 2 unique solutions, unless all coefficients are $0$, i.e., $4ab=c^2$, $2bd=ce$, and $4bf=e^2$.   Multiplying the conic by $4b$, the conic becomes
\begin{align*}
4abx^2+4b^2y^2+4bf+4bcxy+4bdx+4bey &= c^2 x^2+4b^2 y^2+e^2+4bcxy+2cex+4bey \\
&= (cx + 2by + e)^2,
\end{align*}
which is reducible, a contradiction.  Thus, $b=0$.  By an identical argument swapping $x$ and $y$, we conclude that $a=0$. 

\end{proof}

\begin{corollary}\label{cor:frac}
For an odd prime power $q$, $\Gamma'$ is the graph of a fractional linear transformation.
\end{corollary}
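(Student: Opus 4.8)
The plan is to solve the defining equation of $\Gamma'$ for $y$ and recognize the result as a fractional linear transformation, using the irreducibility of the conic to guarantee nondegeneracy. By \Cref{lem:ab0}, $\Gamma'$ is the zero set of $p(x,y) = cxy + dx + ey + f$, so any point $(x,y) \in \Gamma'$ satisfies $y(cx+e) = -(dx+f)$. Formally solving gives
\[
y = \frac{-dx - f}{cx + e},
\]
which has the shape of a fractional linear transformation $x \mapsto (\alpha x + \beta)/(\gamma x + \delta)$ with $(\alpha,\beta,\gamma,\delta) = (-d,-f,c,e)$. It then remains to check that this transformation is genuine, i.e., that $\gamma = c \neq 0$ (so the map is not merely affine-linear) and that $\alpha\delta - \beta\gamma \neq 0$ (so it is invertible), and finally to confirm that its graph coincides with $\Gamma'$ exactly.

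Both nondegeneracy conditions should follow from the irreducibility of the homogeneous conic $P(x,y,z) = cxy + dxz + eyz + fz^2$, the homogenization of $p$ obtained by setting $a = b = 0$. First I would observe that if $c = 0$ then $P = z(dx + ey + fz)$ factors, contradicting irreducibility; hence $c = \gamma \neq 0$. Next, the determinant of the transformation is $\alpha\delta - \beta\gamma = -de + fc = cf - de$, and I claim $cf \neq de$: indeed, if $cf = de$, then multiplying through by $c \neq 0$ one verifies the identity $c\,P = (cx + ez)(cy + dz)$, so $P$ again factors, a contradiction. Thus $cf - de \neq 0$ and $y = (-dx-f)/(cx+e)$ is an honest fractional linear transformation.

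Finally I would verify that the graph of this transformation is precisely $\Gamma'$. For every $x$ with $cx + e \neq 0$ there is a unique $y$ satisfying the equation, and these $(x,y)$ are exactly the affine points of $\Gamma'$ away from the pole. The only value needing attention is $x_0 = -e/c$, where the denominator vanishes: if some $(x_0,y) \in \Gamma'$, then substituting $cx_0 + e = 0$ into $cx_0 y + dx_0 + ey + f = 0$ forces $dx_0 + f = 0$ as well, whence $e = -cx_0$ and $f = -dx_0$, giving $cf - de = c(-dx_0) - d(-cx_0) = 0$, contradicting the previous paragraph. Hence no affine point of $\Gamma'$ lies over $x_0$, the pole of the transformation corresponds to the point sent to infinity, and the two graphs agree. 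The algebra here is elementary; the one mild subtlety — and the step I would be most careful about — is this bookkeeping at the pole, ensuring the excluded $x$-value of the transformation matches the (non-)point of $\Gamma'$ rather than silently dropping or double-counting a point.
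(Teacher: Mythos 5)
Your proof is correct and takes essentially the same route as the paper's: solve $cxy+dx+ey+f=0$ for $y$. The paper compresses everything into ``simple manipulation,'' whereas you supply the details it leaves implicit --- that irreducibility forces $c\neq 0$ and $cf-de\neq 0$, and that the pole $x_0=-e/c$ carries no point of $\Gamma'$ --- all of which check out. The one caveat: you invoke \Cref{lem:ab0}, which is stated only for $q\geq 5$, so the case $q=3$ still needs a separate (easy, finite) verification, which the paper's proof handles with an explicit remark.
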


\begin{proof}
For $q \geq 5$, we have that $\Gamma' = \{(x,y): cxy + dx +ey + f = 0\}$ from \Cref{lem:ab0}. Simple manipulation gives
\[
\Gamma' = \{(x,y) : y = (dx+f)/(cx+e)\}.
\]
It is easy to check that the result also holds for $q=3$.
\end{proof}

\begin{corollary}\label{cor:hasext}
The external point in $\Gamma$ is not on the line of infinity in $PG(2,q)$, i.e. $\Gamma = \{(x,y): cxy + dx +ey + f = 0\} \cup \{(x',y')\}$.
\end{corollary}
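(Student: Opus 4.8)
The plan is to use the explicit form of the conic obtained in \Cref{lem:ab0} and simply locate, relative to it, the two points we adjoined at infinity. By \Cref{lem:ab0} and \Cref{cor:frac}, the conic $\Gamma'$ is cut out by the affine equation $cxy + dx + ey + f = 0$, whose homogenization is the degree-two form $p(x,y,z) = cxy + dxz + eyz + fz^2$. Since the external point is, by the discussion preceding \Cref{lem:ab0}, the unique point of $\Omega$ not lying on this conic, it suffices to show that neither of the two adjoined points $[1:0:0]$ and $[0:1:0]$ lies off the conic.

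First I would substitute the two points at infinity directly into $p$. Evaluating $p(1,0,0)=0$ and $p(0,1,0)=0$ shows that both $[1:0:0]$ and $[0:1:0]$ satisfy the conic's equation; this is exactly the content of the vanishing of the $x^2$ and $y^2$ coefficients ($a=b=0$) established in \Cref{lem:ab0}. Hence both adjoined points lie on the conic $\Gamma'$.

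Since $\Omega = \rho(\Gamma) \cup \{[1:0:0],[0:1:0]\}$ and the external point is the unique element of $\Omega$ not on the conic, the previous step forces the external point to lie in $\rho(\Gamma)$. Thus it is the image under $\rho$ of an affine point, say $[x':y':1]$ with $(x',y') \in \F_q^2$, and in particular it does not lie on $\ell_\infty$. This places us in the second of the two cases recorded before \Cref{lem:ab0}, giving $\Gamma = \{(x,y): cxy + dx + ey + f = 0\} \cup \{(x',y')\}$, as claimed.

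There is essentially no hard step once $a=b=0$ is in hand; the only thing to keep straight is the bookkeeping, namely that the two points excluded from $\rho(\Gamma)$ in forming $\Omega$ are precisely the two points at infinity on a conic of the form $cxy + \cdots = 0$. As a consistency check one could instead argue by counting: a nondegenerate conic has $q+1$ points, and since $a=b=0$ forces two of them, $[1:0:0]$ and $[0:1:0]$, onto $\ell_\infty$, the affine part of the conic has only $q-1$ points. Were the external point at infinity, $\Gamma$ would equal this affine conic and so have $q-1$ points, contradicting that $\Gamma$ is the graph of a permutation of $\F_q$ and hence has $q$ points.
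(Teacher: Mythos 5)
Your proof is correct and matches the paper's argument: both substitute $[1:0:0]$ and $[0:1:0]$ into the homogenized form $cxy+dxz+eyz+fz^2$ (using $a=b=0$ from \Cref{lem:ab0}) to see that both adjoined points lie on the conic, so neither can be the external point. The additional counting check you offer is a nice sanity verification but not needed.
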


\begin{proof}
By \Cref{lem:ab0}, we have that $\Omega = \{[x:y:z] : cxy + dxz +eyz + fz^2 = 0,\ (x,y,z)\neq(0,0,0)\} \cup \{ [x':y':z'] \}$, where $[x':y':z']$ is the external point not on the conic. Clearly, $[0:1:0]$ and $[1:0:0]$ both are points that satisfy the equation $cxy + dxz +eyz + fz^2 = 0$, which therefore means $[0:1:0]$ and $[1:0:0]$ were on the conic, i.e., neither is the external point.
\end{proof}

\begin{definition}
    For $q$ a prime power, an \em{extended linear fractional transformation permutation with parameters $\alpha,\beta,\gamma$} with $\alpha \gamma \neq \beta$, or an \em{ELFTP} for short, is a function of $\mathbb{F}_q$ of the form
\begin{equation} \label{eq1}
\sigma(x)=  \begin{cases}
                (\alpha x+\beta)/(x+\gamma), & \text{if $x \neq -\gamma$}\\
		      \alpha, & \text{if $x=-\gamma$}
            \end{cases}
\end{equation}
\end{definition}
It is easy to see that ELFTPs are indeed permutations.

\begin{theorem}\label{thm:fractional}
Let $q>2$ be an odd prime power and $\sigma$ a permutation that admits precisely $(q-1)/2$ collinear triples. Then $\sigma$ is an ELFTP.
\end{theorem}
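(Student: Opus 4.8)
The plan is to build directly on the structure already extracted: by \Cref{cor:frac} and \Cref{cor:hasext}, the graph of $\sigma$ has the form $\Gamma = \Gamma' \cup \{(x',y')\}$, where $\Gamma' = \{(x,y) : cxy + dx + ey + f = 0\}$ is the graph of a fractional linear transformation and $(x',y')$ is a genuine (affine) external point. What remains is to normalize $\Gamma'$ into the shape of \eqref{eq1} and to show that the single external point supplies exactly the missing branch $\sigma(-\gamma)=\alpha$.

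First I would note that $c\neq 0$: were $c=0$, the homogeneous form $cxy+dxz+eyz+fz^2$ would factor as $z(dx+ey+fz)$, contradicting the irreducibility of the conic. Dividing the defining equation by $c$ and relabeling the coefficients, we may therefore assume $\Gamma'=\{(x,y): xy+\text{(linear)}=0\}$ and solve for $y$ to write $\sigma(x)=(\alpha x+\beta)/(x+\gamma)$ for all $x\neq-\gamma$, with $\alpha,\beta,\gamma$ the relabeled parameters. This already matches the first branch of \eqref{eq1} on $\mathbb{F}_q\setminus\{-\gamma\}$.

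Next I would pin down the external point using the permutation property. Since $\sigma$ is a bijection, its graph meets each vertical line exactly once, and the $q-1$ points of $\Gamma'$ realize precisely the $x$-coordinates in $\mathbb{F}_q\setminus\{-\gamma\}$; hence $x'=-\gamma$. Dually, regard $x\mapsto(\alpha x+\beta)/(x+\gamma)$ as a M\"obius transformation of $\mathbb{P}^1(\mathbb{F}_q)$: it sends $-\gamma\mapsto\infty$ and $\infty\mapsto\alpha$, so as $x$ runs over $\mathbb{F}_q\setminus\{-\gamma\}$ its image is exactly $\mathbb{F}_q\setminus\{\alpha\}$. As $\sigma$ is onto, the one missing value $\alpha$ must equal $y'$. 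Thus the external point is $(-\gamma,\alpha)$, which is precisely the second branch $\sigma(-\gamma)=\alpha$ of \eqref{eq1}.

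Finally I would verify the constraint $\alpha\gamma\neq\beta$. The determinant of the $3\times 3$ symmetric matrix representing the conic $xy+\text{(linear)}$ is a nonzero scalar multiple of $\alpha\gamma-\beta$, so the irreducibility of the conic is equivalent to $\alpha\gamma\neq\beta$. Combining the two branches shows $\sigma$ is an ELFTP with parameters $(\alpha,\beta,\gamma)$. I expect the only genuinely delicate point to be confirming that the missing domain value $-\gamma$ and the missing range value $\alpha$ are accounted for by one and the same external point, rather than by two independent defects; this is exactly where the bijectivity of $\sigma$ together with the M\"obius-bijection computation of the range does the essential work.
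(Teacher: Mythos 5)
Your proposal is correct and follows essentially the same route as the paper: normalize the conic equation to $y=(\alpha x+\beta)/(x+\gamma)$, identify $x'=-\gamma$ from the missing domain value, and identify $y'=\alpha$ from the missing range value via bijectivity. The only differences are that you explicitly justify $c\neq 0$ and the condition $\alpha\gamma\neq\beta$ from irreducibility of the conic, two points the paper leaves implicit; these are welcome but do not change the argument.
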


\begin{proof}
By \Cref{cor:frac} and \Cref{cor:hasext}, we have that
\[
\Gamma = \{(x,\sigma(x)\} = \{(x,y) : y = (dx+f)/(cx+e)\}\cup \{(x',y')\}.
\]
For simplicity, we can divide through by $c$, and then just relabel the coefficients to look like \Cref{eq1}. That is,
\[
\frac{(d/c)x + (f/c)}{x + (e/c)} = \frac{\alpha x+\beta}{x+\gamma}.
\]

Then we are left with
\[
\Gamma = \left\{ \left(x,\frac{\alpha x+\beta}{x+\gamma}\right): x \in \F_q-\{x'\} \right\} \cup \{(x',y')\}
\]
where $\alpha \gamma \neq \beta$. We claim that $(x',y') = (-\gamma,\alpha)$. The domain of $(\alpha x+\beta)(x+\gamma)$ does not include $-\gamma$, so $x'=-\gamma$. We then can manipulate the fraction to show
\begin{align*}
    \frac{\alpha x+\beta}{x+\gamma} &= \frac{\alpha x + \alpha \gamma + \beta - \alpha \gamma}{x+\gamma} = \alpha + \frac{\beta - \alpha \gamma}{x+\gamma}.
\end{align*}
Using the fact that $\alpha\gamma \neq \beta$, we have that $\alpha + (\beta-\alpha\gamma)/(x+\gamma)$ is never equal to $\alpha$. Since $\Gamma$ is the graph of a permutation, there must be a point $(x,y)\in\Gamma$ where $y=\alpha$, so our external point is $(-\gamma,\alpha)$. 

Therefore $\sigma$ is an ELFTP.
\end{proof}

\begin{theorem}\label{thm:lexleast}
For an odd prime power $q$, the lexicographic-least permutation minimizing the number of collinear triples is
\[
g(x)=\begin{cases}
		1, & \text{if $x=1$}\\
            x/(x-1), & \text{otherwise}
		 \end{cases}
\]
\end{theorem}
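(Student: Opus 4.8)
The plan is to leverage \Cref{thm:fractional}, which tells us that every permutation attaining the minimum of $(q-1)/2$ collinear triples is an ELFTP. Since $g$ will be shown to be a minimizer, the set of minimizers is a nonempty finite set; let $\sigma^*$ denote its lexicographically least element, with respect to the standard order $0 < 1 < \cdots < q-1$ on the natural representatives. I would first verify that $g$ is itself a minimizer; granting this, $\sigma^* \leq_{\mathrm{lex}} g$, and I can read off a short prefix of $\sigma^*$. Indeed $\sigma^*(0) \leq g(0) = 0$ forces $\sigma^*(0) = 0$; then, since $\sigma^*$ agrees with $g$ at $0$, we get $\sigma^*(1) \leq g(1) = 1$ and $\sigma^*(1) \neq \sigma^*(0)$, so $\sigma^*(1) = 1$; similarly $\sigma^*(2) \leq g(2) = 2$ together with $\sigma^*(2) \notin \{0,1\}$ gives $\sigma^*(2) = 2$. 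Because $\sigma^*$ is itself an ELFTP by \Cref{thm:fractional}, it then suffices to show that $g$ is the unique ELFTP with $\sigma(0)=0$, $\sigma(1)=1$, $\sigma(2)=2$.

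For the uniqueness step I would recall that an ELFTP with parameters $(\alpha,\beta,\gamma)$ is the map induced on $\F_q$ by the M\"obius transformation $\phi(x) = (\alpha x+\beta)/(x+\gamma)$, with the pole $-\gamma$ sent to $\phi(\infty) = \alpha$; in particular $\phi$ does not fix $\infty$. I would split into cases according to which of $0,1,2$ is the pole $-\gamma$. If none of them is the pole, then $\phi$ fixes the three points $0,1,2$, hence $\phi = \mathrm{id}$, contradicting that the pole is finite. The cases $-\gamma = 0$ and $-\gamma = 2$ each force a relation that reduces (for $q \geq 5$) to $q \mid 3$, and are therefore excluded. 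The remaining case $-\gamma = 1$ yields $\alpha = 1$ (from $\sigma(1) = \alpha = 1$) and then $\beta = 0$ (from $\sigma(0)=0$), i.e.\ exactly $g(x) = x/(x-1)$; one checks $g(2)=2$ for consistency. This establishes uniqueness for $q \geq 5$.

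The remaining, and I expect the most delicate, ingredient is showing that $g$ actually attains the minimum. Writing $y = x/(x-1)$ as $xy - x - y = 0$ exhibits the graph of $g$ as the affine conic $\mathcal{Q} = \{xy - x - y = 0\}$ together with the single external point $(1,1)$, which does not satisfy the equation. Since a line meets a conic in at most two points, no three points of $\mathcal{Q}$ are collinear, so every collinear triple of $g$ must use the point $(1,1)$. The number of such triples therefore equals the number of secants to $\mathcal{Q}$ through $(1,1)$, and for a conic over a field of odd order an external point lies on exactly two tangents and $(q-1)/2$ secants. To finish I would check that the two tangents from $(1,1)$ are precisely the lines joining it to the two infinite points $[1:0:0]$ and $[0:1:0]$ that were removed in passing from $\Omega$ to $\Gamma$ (equivalently, that these are the points of tangency); hence every secant through $(1,1)$ meets the \emph{affine} conic in two points of $\Gamma'$, and $g$ has exactly $(q-1)/2$ collinear triples. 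The same computation in fact shows every ELFTP is a minimizer, the converse of \Cref{thm:fractional}.

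Combining the three ingredients gives $\sigma^* = g$ for all odd prime powers $q \geq 5$. Finally I would dispose of $q = 3$ by hand: there $g$ is the identity, whose graph $\{(0,0),(1,1),(2,2)\}$ is collinear and hence has the single collinear triple demanded by the bound $(q-1)/2 = 1$, and the identity is obviously the lexicographically least permutation of $\{0,1,2\}$. The main obstacle is the minimizer verification: one must be sure that removing the two points at infinity does not destroy any of the $(q-1)/2$ secant triples, which is exactly the content of identifying the tangency points with $[1:0:0]$ and $[0:1:0]$.
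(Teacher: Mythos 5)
Your proposal is correct, and its skeleton---reduce to ELFTPs via \Cref{thm:fractional}, then show that $g$ is the unique ELFTP with $\sigma(0)=0$, $\sigma(1)=1$, $\sigma(2)=2$---matches the paper's proof. The differences are in completeness and in how the uniqueness step is executed. The paper runs a direct four-way case analysis on $\gamma$, solving small linear systems; you organize the cases by which of $0,1,2$ is the pole and dispatch the generic case by sharp $3$-transitivity of $\mathrm{PGL}_2(\F_q)$ (a M\"obius map fixing $0,1,2$ cannot move $\infty$ to $\alpha$), which is cleaner and makes transparent that the excluded cases fail only when $3=0$; the paper's Cases 2 and 3 quietly derive ``contradictions'' such as $\alpha=3$ versus $\alpha=0$ that are not contradictions when $q=3$ (harmless, since for $q=3$ the prefix determines the permutation, but your separate treatment of $q=3$ is tidier). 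More substantially, you supply two ingredients the paper leaves implicit: the argument that the lexicographically least minimizer must agree with $g$ on $0,1,2$ (which presupposes that $g$ is a minimizer), and the verification that $g$ attains exactly $(q-1)/2$ collinear triples. Your secant count through the external point $(1,1)$ of the nondegenerate conic $xy-xz-yz=0$ is right: the tangents at $[1:0:0]$ and $[0:1:0]$ are $y=z$ and $x=z$, both of which contain $[1:1:1]$, so the remaining $(q-1)/2$ non-external lines through $(1,1)$ are secants meeting the affine conic in two points each; this also establishes the converse of \Cref{thm:fractional}, that every ELFTP is a minimizer. The paper takes attainment for granted (presumably from \cite{cooper}); your version makes the theorem self-contained.
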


\begin{proof}
    Let $\sigma(x)$ be a permutation that admits $(q-1)/2$ collinear triples; then, by \Cref{thm:fractional}, it is an ELFTP. We argue that there is exactly one such permutation satisfying $\sigma(0)=0$, $\sigma(1)=1$, and $\sigma(2)=2$, from which it follows that it must be the lexicographic-least. There are 4 cases: $\gamma\notin\{0,-1,-2\}$, $\gamma =0$, $\gamma =-1$, and $\gamma =-2$.

    \textbf{Case 1:} $\gamma \notin\{0,-1,-2\}$\\
    Then $\alpha x+\beta=\sigma(x)(x+\gamma )$ for $x \in \{0,1,2\}$. That $\sigma(0)=0$ implies $\beta =0$. Since $\sigma(1)=1$ and $\sigma(2)=2$, it follows that $\alpha =1+\gamma $ and $2\alpha=4+2\gamma$, a contradiction.

    \textbf{Case 2:} $\gamma =0$\\
    Then $\alpha x+\beta = x\sigma(x)$ for $x \in \{1,2\}$. Since $\sigma(1)=1$ and $\sigma(2)=2$, it follows that $\alpha +\beta=1$ and $2\alpha+\beta=4$. This implies $\beta =-2$ and $\alpha =3$. But since $\sigma(-\gamma)=\alpha$, we may conclude that $\alpha =\sigma(0)=0$, a contradiction.

    \textbf{Case 3:} $\gamma =-2$\\
    Then $\alpha x+\beta = \sigma(x)(x-2)$ for $x \in \{0,1\}$. Since $\sigma(0)=0$ and $\sigma(1)=1$, it follows that $\beta =0$ and $\alpha +\beta=-1$. This implies $\alpha =-1$. But since $\sigma(-\gamma)=\alpha$, we may conclude that $\alpha =\sigma(2)=2$, a contradiction.

    \textbf{Case 4:} $\gamma =-1$\\
    Then $\alpha x+\beta = \sigma(x)(x-1)$ for $x \in \{0,2\}$. Since $\sigma(0)=0$ and $\sigma(2)=2$, it follows that $\beta =0$ and $2\alpha+\beta=2$. This implies $\alpha =1$. Since $\sigma(-c)=a$, we may conclude that $\sigma(1)=1$. So $\alpha =1$, $\beta =0$, $\gamma =-1$ yields the unique permutation of form \Cref{eq1} with $(\sigma(0),\sigma(1),\sigma(2))=(0,1,2)$.

    Thus, there is exactly one ELFTP $\sigma$ that satisfies $\sigma(0)=0$, $\sigma(1)=1$, and $\sigma(2)=2$, and it is the desired lexicographically-least permutation.
\end{proof}

\section{Conclusion}

It is worth noting that, although \cite{cooper} concerned itself primarily with permutations of $\mathbb{Z}_p$ for $p$ prime, some of the results therein apply to more general geometries than just  finite $2$-dimensional vector spaces of odd prime order.  In particular, we generalize here the short proof that every graph of a permutation of $\mathbb{Z}_p$ admits three collinear points, so that it applies to any finite affine plane.  Given an affine plane $(\mathcal{P},\mathcal{L})$ and two parallel classes $H$ and $V$ of its lines, define a ``generalized permutation'' to be a set $S \subset P$ which is a transversal to both $H$ and $V$, i.e., so that, for each $\ell \in H \cup V$, there exists exactly one $p \in S$ so that $p \in \ell$.

\begin{theorem} \label{thm:oldthm}
    Every generalized permutation in an affine plane of odd order $q$ admits a collinear triple.
\end{theorem}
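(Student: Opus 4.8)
The plan is to argue by contradiction via a double count of the pairs of points of $S$, organized by the direction (parallel class) each pair determines. I will use only the combinatorial axioms of a finite affine plane of order $q$: there are $q^2$ points, every line has $q$ points, the lines fall into $q+1$ parallel classes, each class consists of $q$ pairwise disjoint lines that partition the point set, and any two distinct points lie on a unique line and hence determine a unique parallel class. Nothing about coordinates or the field is needed, which is exactly why the short $\mathbb{Z}_p$ argument should survive the passage to arbitrary affine planes.

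First I would record that $|S| = q$: since $S$ is a transversal to the $q$ lines of $H$, and these partition the $q^2$ points, $S$ meets each of them exactly once. The transversal property, applied to both $H$ and $V$, further shows that no two points of $S$ can lie on a common line of $H \cup V$; equivalently, no pair of points of $S$ determines the direction $H$ or the direction $V$. Consequently every one of the $\binom{q}{2}$ pairs of points of $S$ determines one of the remaining $q-1$ parallel classes.

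Next, suppose toward a contradiction that $S$ has no collinear triple. Fix a parallel class $D \notin \{H,V\}$. Its $q$ lines partition the point set, hence they partition the $q$ points of $S$, and since no line contains three points of $S$, each line of $D$ meets $S$ in at most two points. Therefore the number of pairs of points of $S$ lying in direction $D$ equals the number of lines of $D$ meeting $S$ in exactly two points, which is at most $\lfloor q/2 \rfloor = (q-1)/2$ precisely because $q$ is odd. Summing over the $q-1$ admissible directions bounds the total number of pairs of $S$ by $(q-1)\cdot(q-1)/2$.

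Combining the two counts gives $\binom{q}{2} = q(q-1)/2 \le (q-1)^2/2$, i.e.\ $q \le q-1$, a contradiction; so $S$ must contain a collinear triple. The only delicate point, and the heart of the matter, is the use of oddness: for even $q$ each admissible class could carry up to $q/2$ pairs, and the bound becomes $(q-1)q/2 = \binom{q}{2}$, so the inequality is no longer strict and the argument collapses. Thus the parity hypothesis is essential and is exactly what forces the contradiction.
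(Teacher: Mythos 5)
Your proof is correct and is essentially the same argument as the paper's: both count the $\binom{q}{2}$ pairs of $S$ according to which of the $q-1$ admissible parallel classes they determine, and both exploit $|S|=q$ together with the oddness of $q$ to show the pairs cannot all sit on lines meeting $S$ in at most two points. The only difference is presentational — you sum the bound $\lfloor q/2\rfloor$ over all classes and derive a global contradiction, whereas the paper applies pigeonhole to find a single class carrying $\lceil q/2\rceil$ pairs and argues locally there; the two are contrapositive formulations of the same count.
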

\begin{proof}
    Let $\{L_1,\ldots,L_{q-1},H,V\}$ be a partition of $\mathcal{P}$ into $q+1$ parallel classes of $q$ lines each.  Consider a generalized permutation $S$ and its set $\binom{S}{2}$ of unordered pairs. Since no such pairs are contained in an element of $H$ or $V$, for all $\{p_1,p_2\} \in \binom{S}{2}$, there exists exactly one $j \in [q-1]$ so that there exists an $\ell \in L_j$ with $\{p_1,p_2\} \subset L_j$.  By the pigeonhole principle, some $L_j$ contains at least $\lceil \binom{q}{2}/(q-1) \rceil= \lceil q/2 \rceil = (q+1)/2$ pairs.  These pairs cannot all be contained in distinct (and therefore disjoint) elements of $L_j$, or else we would have $\geq 2 \cdot (q+1)/2 = q+1$ points in $S$, a contradiction.  Thus, two pairs are contained in some $\ell \in L_j$, which implies that $\ell \cap S$ contains at least three points.
\end{proof}

For convenience, we denote $\{1,\ldots,n\}$ by $[n]$.  An interesting restatement of the above result takes advantage of the well-known connection between affine planes and complete sets of mutually orthogonal latin squares (MOLs): {\em For every complete set $M_1,\ldots,M_{q-1}$ of MOLs of odd order $q$, there exists a $j \in [q-1]$ and $k \in [q]$ so that the entry $k$ repeats at least $3$ times on the diagonal of $M_j$.}  To see this, consider the MOLs arising from an affine plane with distinguished parallel classes $H = \{h_1,\ldots,h_q\}$ and $V = \{v_1,\ldots,v_q\}$.  The points of $\mathbb{A}$ can be coordinatized as $[q] \times [q]$ by identifying $p$ with $(i,j)$ if $h_i \cap v_j = p$.  A generalized permutation $S$ provides a bijection between $H$ and $V$, which we may assume sends $h_i$ to $v_i$ for each $i$, since the labels on $H$ and $V$ are arbitrary.  That $\{p_1,p_2,p_3\} \subseteq S$ is collinear is the fact that, for some $j \in [q-1]$ and $k \in [q]$, the entries of $M_j$ where $k$ occurs contains $\{p_1,p_2,p_3\}$.  Since the labeling of $H$ and $V$ was chosen so that, for each $i \in [3]$, $p_i = (t_i,t_i)$ for some $t_i$, this means that the $(t_i,t_i)$ entry of $L_j$ is $k$, i.e., $L_j$ has at least three $k$ entries on its diagonal.

Thus, it is natural to ask the following generalization of the question resolved by Li in \cite{li} and discussed in the introduction above.  For an affine plane $\mathbb{A}$, let $\Psi(\mathbb{A})$ denote the fewest number of collinear triples in any generalized permutation of $\mathbb{A}$.  It is not hard to see that, if $\mathbb{A} = AG(\F_q,2)$ with $q = 2^k$, the minimum number of collinear triples in a permutation is $0$.  Indeed, let $\sigma : \F_q \rightarrow \F_q$ be defined by $\sigma(x)=1/x$ if $x \neq 0$, and $\sigma(0)=0$.  For $x,y,z \neq 0$ distinct elements of $\F_q$, collinearity amounts to the equation
$$
\frac{1/x-1/y}{x-y} = \frac{1/x-1/z}{x-z}
$$
which simplifies to $(z-x)(y-x)(x-z)=0$, a contradiction.  If $z=0$, the equation becomes
$$
\frac{1/x-1/y}{x-y} = \frac{1/x}{x}
$$
which simplifies to $y=-x$, i.e., $y=x$ because $\F_q$ has characteristic $2$, a contradiction.  This leaves open the question for other planes than those arising from Desarguesian geometries, so we ask:

\begin{question}
    What is $\Psi(\mathbb{A})$ for affine planes $\mathbb{A}$ other than $AG(\F,2)$ for some finite field $\F$, i.e., those that arise from non-Desarguesian projective planes?
\end{question}

In \cite{cooper}, the authors show in Corollary 2.3 that $\Psi(\mathbb{A}) \geq (q-1)/4$ for $\mathbb{A} = AG(q,2)$ when $q$ is odd.  It is worth noting here that the proof applies to all finite affine planes, {\em mutatis mutandis}, replacing each mention of the collection of lines with a given slope by a parallel class.  However, we do not know if Li's (cf.~\cite{li}) stronger lower bound $\Psi(\mathbb{A}) \geq (q-1)/2$ holds, nor do we have any nontrivial upper bounds.  In fact, we are able to compute this value at least for a few small affine planes not arising from vector spaces.  There are three non-Desarguesian projective planes of order $9$: the ``Hall plane'', its dual, and the ``Hughes plane''.  According to \cite{hurkens} (citing \cite{kamber}), there are seven non-isomorphic affine planes of order $9$, namely, $AG(9,2)$ and six obtained by, for each of the aforementioned projective planes, deleting one line in two different ways. Using data provided by \cite{moore} via SageMath \cite{sage}, we tested the above question for all seven affine planes, obtaining $\Psi(\mathbb{A})=4$ in every case but one: interestingly, the geometry obtained by deleting the unique translation line of the Hall plane has $\Psi(\mathbb{A})=5$.\\

We also reiterate here a conjecture from \cite{cooper} which extends the above question in a different direction.

\begin{question}
    Suppose $\sigma$ is a permutation of $\mathbb{Z}_n$, for $n$ composite.  What is the fewest number $\Psi(n)$ of collinear triples in the graph of $\sigma$?  For which $n$ is it positive?
\end{question}

The quantity $\Psi(n)$ is known for all $n \leq 17$ (\cite[A379299]{oeis}).  Of course, one may also substitute any finite subset of any ring for $\mathbb{Z}_n$ in the above question. \\

An consequence of the argument for Proposition \ref{prop:kstar} is that, if a permutation achieves the minimum number $(q-1)/2$ of collinear triples in $AG(q,2)$, then it has no collinear quadruples, since otherwise Bonferroni's Inequality would be strict and violate the known lower bound on $k^*(\Omega)$.  We therefore ask:

\begin{question}
    Is it true that every generalized permutation in a finite affine plane which minimizes the number of collinear triples admits no collinear quadruples?
\end{question}

\bibliography{biblio}{}
\bibliographystyle{plain}
 
\end{document}